\providecommand{\U}[1]{\protect \rule{.1in}{.1in}}
\newtheorem{theorem}{Theorem}
\newtheorem{definition}[theorem]{Definition}
\newtheorem{lemma}[theorem]{Lemma}
\newtheorem{proposition}[theorem]{Proposition}
\newtheorem{remark}[theorem]{Remark}
\newenvironment{proof}[1][Proof]{\noindent \textbf{#1.} }{\  \rule{0.5em}{0.5em}}
\newcommand{\hmE}{{ \mathbb {\hat E}}}
\newcommand{\lmu}{\underline{\mu}}
\newcommand{\bmu}{\bar{\mu}}
\newcommand{\R}{{\mathbb R}}
\begin{document}

\title{  Optimal  Unbiased Estimation  for Maximal Distribution }
 \date{November 24, 2016}
 \author{Hanqing Jin\thanks{Mathematical Institute, Andrew Wiles Building, Oxford University}\,\,\,\,  and \,\,\,\,  Shige Peng\thanks{
School of Mathematics and Institute of Advanced Research, Shandong University, Jinan 250100, China. This research is partially supported by 
Zhongtai Institute of Finance, Shandong University and NSF China, No. L1624032. and No. 11526205 and 111 Project, No. B12023 of Chinese SAFEA.}}

 \maketitle
 
 \begin{abstract}
 Unbiased estimation for parameters of  maximal distribution is a very fundamental problem in the statistical theory of sublinear expectation. In this paper, we  proved that the maximum estimator is the largest unbiased estimator for the upper mean and the minimum estimator is the smallest unbiased estimator the the lower mean.   \\
 
{\bf Keywords: } sublinear expectation,  maximal distributions, Optimal unbiased estimation.   \end{abstract}

\section{Introduction}

Motivated by model uncertainties in statistics, the risk measures, super hedge pricing and modeling uncertainty in finance, Peng \cite{p1, p2,p3,p4, p5, p6} initiated the study of nonlinear expectations.   Since it has essentially and even naturally embodied the uncertainty occurred in probability models and distribution models, the theory of nonlinear expectation has been developed quickly and has already made substantial progress in recent years. However,   statistical analysis in nonlinear expectation cases hasn't accomplished such development. We refer to Lin et al. \cite{Lin1,Lin2} and Rokhlin \cite{Rokhlin}, up to our knowledge. One of the main reasons is that it takes time to fully comprehend the meaning of the theoretical results and data analysis.  

In the classical statistics theory, the law of large numbers (LLN) provide the basis for obtaining the unbiased estimator for parameters of unknown distribution. Peng \cite{p3,p5} obtained the  corresponding LLN  in the setting of sublinear expectations.    For any
i.i.d. sequence $\{ \xi_{i}\}_{i=1}^{\infty}$ in a sublinear expectation space
$(\Omega,\mathcal{H},\mathbb{\hat{E}})$ satisfying
\[
\lim_{c\rightarrow \infty}\mathbb{\hat{E}}[(|\xi_{1}|-c)^{+}]=0,
\]
then $\frac{1}{n}S_{n}=\frac{1}{n}(\xi_{1}+\cdots+\xi_{n})$ converges in law to
a maximal distribution $X\overset{d}{=}M_{[\underline{\mu},\overline{\mu}]}$. (Some details will be presented in next section).  The parameters $\underline{\mu}$ and $\overline{\mu}$ describe the distributional uncertainty. The problem of how to estimate the
parameters $\underline{\mu}$ and $\overline{\mu}$ becomes also crucially
important. In fact this problem also concerns how the whole theory of
nonlinear expectation can be applied with it's statistics.  In this paper,  we will discuss a very fundamental problem: how to choose suitable  estimators for the upper mean
$\overline{\mu}=\mathbb{\hat{E}}[X]$ and lower-mean $\underline{\mu
}=-\mathbb{\hat{E}}[-X_{1}]$ for a maximal distributed random variable $X$.  We will prove that the maximum
estimator  is the largest unbiased  estimator for
$\overline{\mu}$ and  the minimum
estimator  is the smallest unbiased
estimator for $\underline{\mu}$.  At last, we use this result to provide a very
general estimators for the nonlinear distribution of $X$.

The paper is organized as follows: in Section 2, we present some basic
theoretical result of sublinear expectation and the notion of G-distributions.
The estimators of maximally distributed random variables is studied in Section 3.   A general estimator for the distribution of a general random variable  will be given  in Section 4.  

\section{Preliminary}
\subsection{ Sublinear expectations}
Let $\Omega$ be a given set and let $\mathcal{H}$ \label{huah}be a linear
space of real valued functions defined on $\Omega$. In this paper, we suppose
that $\mathcal{H}$ satisfies $c\in\mathcal{H}$ for each constant $c$ and $%
|X|\in\mathcal{H}$ if $X\in\mathcal{H}$. The space $\mathcal{H}$ can be
considered as the space of random variables.

\begin{definition}
{\label{Def-1 copy(1)} {A \textbf{Sublinear expectation }\label{sube}
\index{Sublinear expectation}$\hat{\mathbb{E}}$ is a functional $\mathbb{{E} }:%
\mathcal{H}\rightarrow \mathbb{R}$ satisfying }}

\noindent{\textbf{\textup{(i)} Monotonicity: }%
\begin{equation*}
\hat{\mathbb{E}}[X]\geq \hat{\mathbb{E}}[Y] \ \
\text{if}\ X\geq Y,
\end{equation*}
}

\noindent \textbf{\textup{(ii)} Constant preserving:}
\begin{equation*}
{\hat{\mathbb{E}}}[c]=c\ \ \ \text{for}\ c\in \mathbb{R},
\end{equation*}

\noindent \textbf{\textup{(iii)} Sub-additivity: } For each $X,Y\in{\mathcal{%
H}}$,
\begin{equation*}
{\hat{\mathbb{E}}}[X+Y]\leq {\hat{\mathbb{E}}}[X]+{\hat{\mathbb{E}}}[Y],
\end{equation*}

\noindent \textbf{\textup{(iv)} Positive homogeneity:}
\begin{equation*}
{\hat{\mathbb{E}}}[\lambda X]=\lambda \hat{\mathbb{E}}[X]\ \ \ \text{for}\ \lambda \geq0%
\text{.}
\end{equation*}

The triple $(\Omega ,\mathcal{H},\hat{\mathbb{E}})$ is called a \textbf{%
sublinear expectation space}%
\index{Sublinear expectation space}.\label{subls}  
A sublinear expectation ${\hat{\mathbb{E}}}[\cdot]$ defined on   $(\Omega ,\mathcal{H})$
is said to be \textbf{regular}
 if 
\begin{equation}\label{1.2.1}
{\hat{\mathbb{E}}}[X_{i}]\rightarrow0
\end{equation}
for each sequence $\{X_{i}\}_{i=1}^{\infty}$ of random variables in  $\mathcal{H}$  such that\ $X_{i}(\omega)\downarrow0$ for each $\omega\in \Omega$.

If \textup{(i)} and
\textup{(ii)} are satisfied, {{$\hat{\mathbb{E}}$ is called a }}\textbf{nonlinear
expectation} and
\index{Nonlinear expectation} the triple $(\Omega ,\mathcal{H},\hat{\mathbb{E}}%
\mathbb{)}$ is called a \textbf{nonlinear expectation space}
\index{Nonlinear expectation space}.
\end{definition}

\begin{definition}\label{DefI.1.2}
Let $\hat{\mathbb{E}}_{1}$ and $\hat{\mathbb{E}}_{2}$ be two nonlinear expectations
defined on $(\Omega ,\mathcal{H})$. $\hat{\mathbb{E}}_{1}$ is said to be \textbf{%
dominated} by $\hat{\mathbb{E}}_{2}$ if
\begin{equation}
\hat{\mathbb{E}}_{1}[X]-\hat{\mathbb{E}}_{1}[Y]\leq \hat{\mathbb{E}}_{2}[X-Y]\ \ ~%
\text{for}\ X,Y\in \mathcal{H}.  \label{DominationI.1.2}
\end{equation}
\end{definition}

\begin{remark}
From \textup{(iii)}, a sublinear expectation is dominated by itself. In many
situations, \textup{(iii)} is also called the property of self-domination.
If the inequality in \textup{(iii)} becomes equality, then {{$\hat{\mathbb{E}}$}}
is a linear expectation, i.e., {{$\hat{\mathbb{E}}$}} is a linear functional
satisfying \textup{(i)} and \textup{(ii)}.
\end{remark}

\begin{remark}
\textup{(iii)+(iv)} is called \textbf{sublinearity}%
\index{Sublinearity}. This sublinearity implies

\noindent\textbf{\textup{(v)}} \textbf{Convexity}:
\begin{equation*}
\hat{\mathbb{E}}[\alpha X+(1-\alpha)Y]\leq \alpha \hat{\mathbb{E}}[X]+(1-\alpha)\mathbb{E%
}[Y]\ \
\text{for}\ \alpha \in \lbrack0,1].
\end{equation*}
If a nonlinear expectation $\hat{\mathbb{E}}$ satisfies convexity, we call it a
\textbf{convex expectation}.%
\index{Convex expectation}

The properties \textup{(ii)}\textup{+}\textup{(iii)} implies

\noindent\textup{\textbf{(vi)}} \textbf{Constant translatability}:
\begin{equation*}
\hat{\mathbb{E}}[X+c]=\hat{\mathbb{E}}[X]+c\ \
\text{for} \ c\in\mathbb{R}.
\end{equation*}
\end{remark}

In this paper, we are mainly  concerned with sublinear expectations.
In the following sections, unless otherwise stated, we consider the
following sublinear expectation space $(\Omega,\mathcal{H},\hat{\mathbb{E}}\mathbb{%
)}$: if $X_{1},\cdots,X_{n}\in \mathcal{H}$ then $\varphi(X_{1},%
\cdots,X_{n})\in \mathcal{H}$ for each $\varphi \in C_{Lip}(\mathbb{R}^{n})
$\label{cllip} where $C_{Lip}(\mathbb{R}^{n})$ denotes the linear space of
functions $\varphi$ satisfying the following Lipschitz condition:
\begin{align*}
|\varphi(x)-\varphi(y)| & \leq C|x-y|\,\,\,\,  \ \text{ for all }\
x,y\in \mathbb{R}^{n}\text{, \ } \\
\ & \text{ for some constant }C>0 \text{ depending on }\varphi.
\end{align*}
Often the $n$-dimensional  random variables  $X=(X_{1},\cdots,X_{n})$ is called an $n$-dimensional random
vector, denoted by $X\in \mathcal{H}^{n}$.

Here we mainly use $C_{Lip}(\mathbb{R}^{n})$ in our framework only for some
convenience of techniques. In fact our essential requirement is that $%
\mathcal{H}$ contains all constants and, moreover, $X\in \mathcal{H}$
implies $\left \vert X\right \vert \in \mathcal{H}$. In general, $C_{Lip}(%
\mathbb{R}^{n})$ can be replaced by any one of the following spaces of
functions defined on $\mathbb{R}^{n}$.

\begin{itemize}

\item {\ $L^{0}(\mathbb{R}^{n})$: the space of Borel measurable functions; \label{I1.L0Rn}}

\item {\ $\mathbb{L}^{\infty}(\mathbb{R}^{n})$: the space of bounded
Borel-measurable functions; }\label{I1.LinftyRn}

\item {\ $C_{b}(\mathbb{R}^{n})$: the space of bounded and continuous
functions;}\label{I1.CbRn}

\item {$C_{b}^{k}(\mathbb{R}^{n})$: the space of bounded and $k$-time
continuously differentiable functions with bounded derivatives of all orders
less than or equal to $k$;\label{cbk}}

\item {$C_{l,Lip}^{k}(\mathbb{R}^{n})$: the space of  $k$-time
continuously differentiable functions, whose partial derivatives of all orders
less than or equal to $k$ are in $C_{l,Lip}(\mathbb{R}^{n})$;}\label{clk}

\item {$C_{l.Lip}(\mathbb{R}^{n})$}: the linear space of functions $\varphi$ satisfying the following locally Lipschitz condition:\label{I1.ClLipRn}
\begin{align*}
|\varphi(x)-\varphi(y)| & \leq C(1+|x|^{m}+|y|^{m})|x-y|\ \ \text{for}\
x,y\in \mathbb{R}^{n}\text{, \ } \\
\ & \text{some }C>0\text{, }m\in \mathbb{N}\text{ depending on }\varphi.
\end{align*}

\item {\ $C_{b.Lip}(\mathbb{R}^{n})$: the space of bounded and Lipschitz
continuous functions; \label{cbl}}\label{I1.CbLipRn}

\item {\ $C_{unif}(\mathbb{R}^{n})$: the space of bounded and uniformly
continuous functions. \label{cu}}\label{I1.CunifRn}
\end{itemize}


\begin{theorem} (Robust Daniell-Stone Theorem)
\label{r1} Assume that $(\Omega,\mathcal{H},{\hat{\mathbb{E}}} )$ is a 
sublinear expectation  space. If   ${\hat{\mathbb{E}}}[\cdot]$ is regular, then there exists a class of probability measures $\{P_{\theta}\}_{\theta\in \Theta}$ on $(\Omega, \sigma(\mathcal{H}))$ such that 
\begin{equation}\label{1.2.2}
{\hat{\mathbb{E}}}[X]=\max_{\theta\in \Theta}\int_{\Omega}X(\omega)dP_{\theta},\ \  \textit{ for each }  X\in \mathcal{H}.
\end{equation}
where $\sigma({\mathcal{H}})$ is the smallest $\sigma$-algebra generated by $\mathcal{H}$.
\end{theorem}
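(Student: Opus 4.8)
The plan is to combine a Hahn--Banach type representation of the sublinear functional $\hat{\mathbb{E}}$ as an upper envelope of linear functionals with the classical Daniell--Stone representation theorem applied to each of these linear functionals.

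First I would show that $\hat{\mathbb{E}}$ is the maximum of the linear functionals it dominates. Let $\mathcal{L}$ denote the set of all linear functionals $\ell:\mathcal{H}\to\mathbb{R}$ with $\ell(Y)\le\hat{\mathbb{E}}[Y]$ for every $Y\in\mathcal{H}$. Since $\hat{\mathbb{E}}$ is sub-additive and positively homogeneous, it is a sublinear functional on $\mathcal{H}$. Fix $X\in\mathcal{H}$ and define $\ell_0(tX)=t\hat{\mathbb{E}}[X]$ on the line $\{tX:t\in\mathbb{R}\}$; using positive homogeneity for $t\ge0$ and the inequality $\hat{\mathbb{E}}[X]+\hat{\mathbb{E}}[-X]\ge\hat{\mathbb{E}}[0]=0$ (from sub-additivity and constant preserving) for $t<0$, one checks $\ell_0(tX)\le\hat{\mathbb{E}}[tX]$ for all $t$, so by the Hahn--Banach theorem $\ell_0$ extends to some $\ell_X\in\mathcal{L}$ with $\ell_X(X)=\hat{\mathbb{E}}[X]$. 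Hence $\hat{\mathbb{E}}[X]=\max_{\ell\in\mathcal{L}}\ell(X)$ for every $X\in\mathcal{H}$, the maximum being attained at $\ell_X$. Moreover each $\ell\in\mathcal{L}$ is positive (if $Y\ge0$ then $-\ell(Y)=\ell(-Y)\le\hat{\mathbb{E}}[-Y]\le\hat{\mathbb{E}}[0]=0$) and normalized, $\ell(1)=1$ (from $\ell(c)\le\hat{\mathbb{E}}[c]=c$ and $\ell(-c)\le-c$), and, crucially, inherits regularity: if $X_i(\omega)\downarrow0$ then $0\le\ell(X_i)\le\hat{\mathbb{E}}[X_i]\to0$ by \eqref{1.2.1}, so $\ell(X_i)\to0$.

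Next I would realize each $\ell\in\mathcal{L}$ as integration against a probability measure. Since $\mathcal{H}$ is a linear space with $|Y|\in\mathcal{H}$ whenever $Y\in\mathcal{H}$, it is a vector lattice (with $Y\vee Z=\tfrac12(Y+Z+|Y-Z|)$ and $Y\wedge Z=\tfrac12(Y+Z-|Y-Z|)$), and since it contains all constants, $Y\wedge1\in\mathcal{H}$; thus $\mathcal{H}$ is a Stone vector lattice. Each $\ell\in\mathcal{L}$ is then a positive linear functional on $\mathcal{H}$ that is continuous from above at $0$, so the Daniell--Stone theorem yields a countably additive measure $P_\ell$ on $\sigma(\mathcal{H})$ with $\ell(Y)=\int_\Omega Y\,dP_\ell$ for all $Y\in\mathcal{H}$; since $P_\ell(\Omega)=\ell(1)=1$, the measure $P_\ell$ is a probability measure.

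Finally, taking $\Theta=\mathcal{L}$ and $P_\theta=P_\ell$ for $\theta=\ell$, we obtain, for every $X\in\mathcal{H}$, that $\int_\Omega X\,dP_\theta=\ell(X)\le\hat{\mathbb{E}}[X]$ for all $\theta\in\Theta$, with equality for $\theta=\ell_X$; hence $\hat{\mathbb{E}}[X]=\max_{\theta\in\Theta}\int_\Omega X\,dP_\theta$, which is \eqref{1.2.2}. The main obstacle is the passage from a $\sigma$-continuous positive linear functional to a genuine countably additive measure on $\sigma(\mathcal{H})$ — that is, verifying the hypotheses of and invoking the Daniell--Stone theorem, in particular the Stone lattice structure of $\mathcal{H}$ and the transfer of regularity from $\hat{\mathbb{E}}$ to every dominated linear functional; by comparison the Hahn--Banach step and the attainment of the maximum are routine.
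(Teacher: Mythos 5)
Your proof is correct, and it is the standard argument for this representation theorem (which the paper states without proof, as a known preliminary from Peng's sublinear expectation theory): Hahn--Banach gives $\hat{\mathbb{E}}$ as the attained maximum of the dominated linear functionals, each of which is positive, normalized, and inherits the regularity \eqref{1.2.1}, and the Daniell--Stone theorem (using that $\mathcal{H}$ is a Stone vector lattice because it contains constants and is closed under $|\cdot|$) converts each such functional into integration against a probability measure on $\sigma(\mathcal{H})$. This also matches the paper's remark that $\{P_{\theta}\}_{\theta\in\Theta}$ can be taken to be all probability measures dominated by $\hat{\mathbb{E}}$.
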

 
In fact, the $\{P_{\theta}\}_{\theta\in\Theta}$ can be chosen as all probability measure $P_\theta$ dominated by $\hat{\mathbb{E}}$. The subset $\{P_{\theta}\}_{\theta\in\Theta}$ can be used to denote the uncertainty subset of probabilities.

\subsection{Distributions and independence of random variables}

We now give the notion of distributions of random variables under sublinear
expectations.
 
 Let $X=(X_{1},\cdots ,X_{n})$ be a given $n$-dimensional random vector on a
nonlinear expectation space $(\Omega ,\mathcal{H},\hat{\mathbb{E}})$. We define a
functional on $C_{Lip}(\mathbb{R}^{n})$ by
\begin{equation*}
\mathbb{F}_{X}[\varphi ]:=\hat{\mathbb{E}}[\varphi (X)]:\varphi \in C_{Lip}(%
\mathbb{R}^{n})\rightarrow \mathbb{R}.\end{equation*}
The triple $(\mathbb{R}^{n},C_{Lip}(\mathbb{R}^{n}),\mathbb{F}_{X})$ forms
a nonlinear expectation space. $\mathbb{F}_{X}$ is called the (nonlinear)  \textbf{%
distribution}
\index{Distribution}of $X$ under $\hat{\mathbb{E}}$. This notion is very useful
for a sublinear expectation space $\hat{\mathbb{E}}$. In this case $\mathbb{F}_{X}$
is also a sublinear expectation. Furthermore we can prove that (see Theorem %
\ref{r1}), there exists a family of probability measures $\{F_{X}(\theta,\cdot)\}_{\theta \in \Theta }$ defined on $(\mathbb{R}^{n},\mathcal{B}(%
\mathbb{R}^{n}))$ such that%
\begin{equation*}
\mathbb{F}_{X}[\varphi ]=\sup_{\theta \in \Theta }\int_{\mathbb{R}%
^{n}}\varphi (x)F_{X}^{\theta }(dx),\label{I3.Fx}\ \
\text{for each }\varphi \in C_{b.Lip}(\mathbb{R}^{n}).
\end{equation*}%
Thus $\mathbb{F}_{X}[\cdot ]$ characterizes the uncertainty of the
distributions of $X$.

\begin{definition}
\label{d1} Let $X_{1}$ and $X_{2}$ be two $n$--dimensional random vectors
defined on a {nonlinear expectation space }$(\Omega,\mathcal{H},%
\hat{\mathbb{E}})$. They are called \textbf{identically distributed},%
\index{Identically distributed}\label{iidd} denoted by $X_{1}\overset{d}{=}%
X_{2}$, if
\begin{equation*}
\hat{\mathbb{E}}[\varphi (X_{1})]=\hat{\mathbb{E}}[\varphi (X_{2})]\ \ \
\text{for}\ \varphi \in C_{Lip}(\mathbb{R}^{n}).
\end{equation*}%
It is clear that $X_{1}\overset{d}{=}X_{2}$ if and only if their
distributions coincide. We say that the distribution of $X_{1}$ is stronger
than that of $X_{2}$ if $\hat{\mathbb{E}}[\varphi (X_{1})]\geq \hat{\mathbb{E}}[\varphi (X_{2})]$, for each $\varphi \in C_{Lip}(\mathbb{R}^{n})$.
\end{definition}

\begin{remark}  
In the many cases of sublinear expectations, $X_{1}\overset{d}{=}X_{2}$ implies
that the uncertainty subsets of distributions of $X_{1}$ and $X_{2}$ are the
same, e.g., in the representation of uncertainty by  Theorem \ref{r1},
\begin{equation*}
\{F_{X_{1}}(\theta_{1},\cdot):\theta_{1}\in \Theta_{1}\}=\{F_{X_{2}}(\theta
_{2},\cdot):\theta_{2}\in \Theta_{2}\}.
\end{equation*}
Similarly if the distribution of $X_{1}$ is stronger than that of $X_{2}$,
then
\begin{equation*}
\{F_{X_{1}}(\theta_{1},\cdot):\theta_{1}\in
\Theta_{1}\}\supset\{F_{X_{2}}(\theta _{2},\cdot):\theta_{2}\in
\Theta_{2}\}.
\end{equation*}
\end{remark}

The distribution of $X\in \mathcal{H}$ has the following four typical
parameters:
\begin{equation*}
\bar{\mu}:={\hat{\mathbb{E}}}[X],\ \ \underline{\mu}:=-\hat{\mathbb{E}}[-X],\ \ \ \ \ \
\ \ \bar{\sigma}^{2}:={\hat{\mathbb{E}}}[X^{2}],\ \ \underline {\sigma}^{2}:=-{%
\hat{\mathbb{E}}}[-X^{2}].\ \
\end{equation*}
The intervals $[\underline{\mu},\bar{\mu}]$ and $[\underline{\sigma}^{2},%
\bar{\sigma}^{2}]$ characterize the \textbf{mean-uncertainty}%
\index{Mean-uncertainty} and the \textbf{2nd moment-uncertainty}
\index{Variance-uncertainty}of $X$ respectively.

\begin{lemma}\label{I-le3}
Let $(\Omega ,\mathcal{H},\hat{\mathbb{E}})$ be a regular sublinear expectation
space. Let $X\in \mathcal{H}^{d}$ be given.  Then for each sequence
$\{\varphi _{n}\}_{n=1}^{\infty }\subset C_{b.Lip}(\mathbb{R}^{d})$
satisfying $\varphi _{n}\downarrow 0$, we have
$\hat{\mathbb{E}}[\varphi _{n}(X)]\downarrow 0$.
\end{lemma}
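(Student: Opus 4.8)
The plan is to reduce the statement directly to the regularity hypothesis \eqref{1.2.1}, treating $\varphi_n(X)$ as the relevant sequence in $\mathcal{H}$. First I would check that the composed random variables actually lie in $\mathcal{H}$: every bounded Lipschitz function is globally Lipschitz, so $C_{b.Lip}(\mathbb{R}^d)\subseteq C_{Lip}(\mathbb{R}^d)$, and by the standing assumption that $\mathcal{H}$ is stable under composition with $C_{Lip}$ functions, $\varphi_n(X)=\varphi_n(X_1,\cdots,X_d)\in\mathcal{H}$ for every $n$, since $X\in\mathcal{H}^d$.

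Next I would transfer the pointwise monotone convergence from $\mathbb{R}^d$ to $\Omega$. By hypothesis $\varphi_n(x)\downarrow 0$ for every $x\in\mathbb{R}^d$; substituting $x=X(\omega)$ gives $\varphi_n(X(\omega))\downarrow 0$ for every $\omega\in\Omega$. Hence $\{\varphi_n(X)\}_{n=1}^{\infty}\subset\mathcal{H}$ is exactly a sequence of the type appearing in the definition of a regular sublinear expectation, so \eqref{1.2.1} yields $\hat{\mathbb{E}}[\varphi_n(X)]\to 0$. Finally, to obtain the monotone ($\downarrow$) conclusion rather than mere convergence, I would invoke property (i) (monotonicity): since $\varphi_n\geq\varphi_{n+1}$ pointwise we have $\varphi_n(X)\geq\varphi_{n+1}(X)$, whence $\hat{\mathbb{E}}[\varphi_n(X)]$ is nonincreasing in $n$; combined with the limit $0$ this gives $\hat{\mathbb{E}}[\varphi_n(X)]\downarrow 0$.

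I do not expect a genuine obstacle here; the lemma is essentially a bookkeeping step that propagates regularity of $\hat{\mathbb{E}}$ on $\mathcal{H}$ to the distribution $\mathbb{F}_X$ on $C_{b.Lip}(\mathbb{R}^d)$. The only two points deserving a word of care are the inclusion $C_{b.Lip}\subseteq C_{Lip}$ (so that $\varphi_n(X)$ is legitimately an element of $\mathcal{H}$, which is what allows us to apply \eqref{1.2.1} at all) and the trivial but essential observation that pointwise monotone decrease of the $\varphi_n$ on $\mathbb{R}^d$ forces the same for $\varphi_n\circ X$ on $\Omega$.
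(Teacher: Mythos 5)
Your argument is correct, and it is the intended one: the paper states this lemma without proof (it is quoted as a standard preliminary from Peng's framework), and the natural proof is precisely your reduction. Since $C_{b.Lip}(\mathbb{R}^d)\subset C_{Lip}(\mathbb{R}^d)$ and $\mathcal{H}$ is closed under composition with $C_{Lip}$ functions, $\varphi_n(X)\in\mathcal{H}$ and $\varphi_n(X(\omega))\downarrow 0$ for every $\omega$, so the regularity condition (\ref{1.2.1}) applies verbatim, and monotonicity (i) upgrades the convergence to a decreasing limit. No gaps.
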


\begin{lemma}\label{I-le4}
Let $(\Omega ,\mathcal{H},\hat{\mathbb{E}})$ be a sublinear expectation
space and let $\mathbb{F}_{X}[\varphi ]:=\hat{\mathbb{E}}[\varphi (X)]$ be
the sublinear distribution of $X\in \mathcal{H}^{d}$. Then there
exists a family of probability measures $\left\{ F_{\theta }\right\}
_{\theta \in \Theta }$ defined on
$(\mathbb{R}^{d},\mathcal{B}(\mathbb{R}^{d}))$ such that
\begin{equation}
\mathbb{F}_{X}[\varphi ]=\sup_{\theta \in \Theta }\int_{\mathbb{R}%
^{d}}\varphi (x)F_{\theta }(dx),\ \ \varphi \in
C_{b.Lip}(\mathbb{R}^{d}). \label{Distr}
\end{equation}
\end{lemma}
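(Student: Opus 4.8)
The plan is to deduce the representation~(\ref{Distr}) from the Robust Daniell--Stone Theorem (Theorem~\ref{r1}), applied not to $(\Omega,\mathcal{H},\hat{\mathbb{E}})$ but to the ``image'' sublinear expectation space $(\mathbb{R}^{d},C_{b.Lip}(\mathbb{R}^{d}),\mathbb{F}_{X})$, whose underlying set is $\mathbb{R}^{d}$, whose space of random variables is $C_{b.Lip}(\mathbb{R}^{d})$, and whose functional is $\varphi\mapsto\mathbb{F}_{X}[\varphi]=\hat{\mathbb{E}}[\varphi(X)]$. Once one verifies that this triple is a \emph{regular} sublinear expectation space in the paper's sense, Theorem~\ref{r1} immediately furnishes a family $\{F_{\theta}\}_{\theta\in\Theta}$ of probability measures on $(\mathbb{R}^{d},\sigma(C_{b.Lip}(\mathbb{R}^{d})))$ with $\mathbb{F}_{X}[\varphi]=\max_{\theta\in\Theta}\int_{\mathbb{R}^{d}}\varphi\,dF_{\theta}$ for all $\varphi\in C_{b.Lip}(\mathbb{R}^{d})$, which is~(\ref{Distr}) after observing that a maximum is a particular supremum.

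Carrying this out, I would first record the structural facts: $C_{b.Lip}(\mathbb{R}^{d})$ is a linear space of real-valued functions on $\mathbb{R}^{d}$ that contains every constant and is stable under $\varphi\mapsto|\varphi|$, so it meets the blanket requirements imposed on $\mathcal{H}$; and since $C_{b.Lip}(\mathbb{R}^{d})\subset C_{Lip}(\mathbb{R}^{d})$, the standing assumption on $\mathcal{H}$ gives $\varphi(X)\in\mathcal{H}$ for each such $\varphi$, so $\mathbb{F}_{X}[\varphi]$ is well defined and real valued. Because the substitution map $\varphi\mapsto\varphi(X)$ is linear, order preserving, and carries the constant function $c$ to the random variable $c$, the four axioms of a sublinear expectation for $\mathbb{F}_{X}$ on $C_{b.Lip}(\mathbb{R}^{d})$ are inherited, one by one, from the corresponding axioms for $\hat{\mathbb{E}}$ on $\mathcal{H}$.

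Next I would verify regularity of the image space, namely that $\mathbb{F}_{X}[\varphi_{n}]\downarrow0$ whenever $\{\varphi_{n}\}\subset C_{b.Lip}(\mathbb{R}^{d})$ with $\varphi_{n}(x)\downarrow0$ for every $x$; but $\mathbb{F}_{X}[\varphi_{n}]=\hat{\mathbb{E}}[\varphi_{n}(X)]$, so this is exactly Lemma~\ref{I-le3} and condition~(\ref{1.2.1}) is met. Applying Theorem~\ref{r1} then produces the desired $\{F_{\theta}\}$ on $(\mathbb{R}^{d},\sigma(C_{b.Lip}(\mathbb{R}^{d})))$, and I would close by identifying $\sigma(C_{b.Lip}(\mathbb{R}^{d}))=\mathcal{B}(\mathbb{R}^{d})$: every member of $C_{b.Lip}(\mathbb{R}^{d})$ is continuous, hence Borel, while conversely $C_{b.Lip}(\mathbb{R}^{d})$ contains, for each $a\in\mathbb{R}^{d}$ and $r>0$, the function $x\mapsto(r-|x-a|)^{+}$ whose positivity set is the open ball centred at $a$ of radius $r$, and these balls generate $\mathcal{B}(\mathbb{R}^{d})$.

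I do not anticipate a genuine obstacle here: the argument is just a transfer of Theorem~\ref{r1} along $\varphi\mapsto\varphi(X)$, with the $\sigma$-algebra identification being routine. The single delicate point is that this reduction hinges on Lemma~\ref{I-le3}, which presupposes that $\hat{\mathbb{E}}$ is regular---this is the place where the countable additivity of the $F_{\theta}$ (and, implicitly, a tightness phenomenon specific to the Polish space $\mathbb{R}^{d}$) is actually secured---so in a careful write-up one should either include regularity of $\hat{\mathbb{E}}$ among the hypotheses or invoke it from the standing assumptions of the paper.
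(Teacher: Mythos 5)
The paper states Lemma \ref{I-le4} without proof (it is a preliminary quoted from Peng's framework), so the only thing to judge is whether your argument actually establishes the statement as written, and there is a genuine gap: Lemma \ref{I-le4} assumes only that $(\Omega,\mathcal{H},\hat{\mathbb{E}})$ is a sublinear expectation space, with no regularity hypothesis, whereas your reduction needs regularity twice --- Lemma \ref{I-le3} explicitly presupposes a \emph{regular} space, and Theorem \ref{r1} applies only to a regular functional. Regularity is not a standing assumption of the paper (Definition \ref{Def-1 copy(1)} introduces it as an extra property, and Theorem \ref{r1} and Lemma \ref{I-le3} invoke it explicitly while Lemma \ref{I-le4} pointedly does not), so your suggested fix of ``adding regularity to the hypotheses'' would prove a strictly weaker lemma. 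The whole content of Lemma \ref{I-le4} is that the distribution of a random vector admits a $\sigma$-additive representation on $C_{b.Lip}(\mathbb{R}^{d})$ even when $\hat{\mathbb{E}}$ itself is not regular.

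The missing idea is that the regularity of the \emph{image} functional $\mathbb{F}_X$ on $C_{b.Lip}(\mathbb{R}^{d})$ is automatic and should be proved directly rather than borrowed from Lemma \ref{I-le3}. Since $x\mapsto|x|$ belongs to $C_{Lip}(\mathbb{R}^{d})$, the standing framework gives $|X|\in\mathcal{H}$ and hence $\hat{\mathbb{E}}[|X|]<\infty$; with $\psi_N(x):=\min\{(|x|-N)^{+},1\}$ one has $\psi_N\le |x|/N$, so $\hat{\mathbb{E}}[\psi_N(X)]\le \hat{\mathbb{E}}[|X|]/N\to0$ (tightness). If $\varphi_n\downarrow0$ in $C_{b.Lip}(\mathbb{R}^{d})$, Dini's theorem gives uniform convergence on the compact ball $\{|x|\le N+1\}$, and pointwise $\varphi_n\le \varepsilon+\|\varphi_1\|_\infty\,\psi_N$ for $n$ large, whence by monotonicity and sub-additivity $\hat{\mathbb{E}}[\varphi_n(X)]\le\varepsilon+\|\varphi_1\|_\infty\,\hat{\mathbb{E}}[|X|]/N$, which tends to $0$. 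With this established, the rest of your transfer is sound: $(\mathbb{R}^{d},C_{b.Lip}(\mathbb{R}^{d}),\mathbb{F}_X)$ is then a regular sublinear expectation space, Theorem \ref{r1} (or, equivalently, a Hahn--Banach representation followed by Daniell--Stone applied to each dominated linear functional, which is the argument behind Peng's original lemma) yields the family $\{F_\theta\}$, and your identification $\sigma(C_{b.Lip}(\mathbb{R}^{d}))=\mathcal{B}(\mathbb{R}^{d})$ via the functions $x\mapsto(r-|x-a|)^{+}$ is correct. In short: replace the appeal to Lemma \ref{I-le3} by the Dini-plus-tightness argument, and do not add regularity of $\hat{\mathbb{E}}$ as a hypothesis.
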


\begin{remark}
The above lemma tells us that in fact the sublinear distribution $\mathbb{F}%
_{X}$ of $X$ characterizes the uncertainty of distribution of $X$
which is an subset of distributions $\left\{ F_{\theta }\right\}
_{\theta \in \Theta } $.
\end{remark}

\begin{definition}
A sequence of $n$-dimensional random vectors $\left\{ \eta _{i}\right\}
_{i=1}^{\infty }$ defined on a nonlinear expectation space $(\Omega ,%
\mathcal{H},\hat{\mathbb{E}})$ is said to \textbf{converge in distribution}
\index{Converge in distribution}(or \textbf{converge in law})%
\index{Converge in law} under $\hat{\mathbb{E}}$ if for each $\varphi \in
C_{b.Lip}(\mathbb{R}^{n})$, the sequence $\left\{ \hat{\mathbb{E}}[\varphi (\eta
_{i})]\right\} _{i=1}^{\infty }$ converges.
\end{definition}

The following result is easy to check.

\begin{proposition}
Let $\left \{ \eta_{i}\right \} _{i=1}^{\infty}$ converge in law in the
above sense. Then the mapping $\mathbb{F}[\cdot]:C_{b.Lip}(\mathbb{R}%
^{n})\rightarrow \mathbb{R}$ defined by%
\begin{equation*}
\mathbb{F}[\varphi]:=\lim_{i\rightarrow \infty}\hat{\mathbb{E}}[\varphi(\eta_{i})]%
\ \
\text{for}\ \varphi \in C_{b.Lip}(\mathbb{R}^{n})
\end{equation*}
is a sublinear expectation defined on $(\mathbb{R}^{n},C_{b.Lip}(\mathbb{R}%
^{n}))$.
\end{proposition}

The following notion of independence plays a key role in the
nonlinear expectation theory.

\begin{definition}
\label{d2} In a nonlinear expectation space $(\Omega,\mathcal{H},\hat{\mathbb{E}})$%
, a random vector $Y\in \mathcal{H}^n$ is said to be \textbf{independent}
\index{Independent} of another random vector $X\in \mathcal{H}^m$ under $%
\hat{\mathbb{E}}[\cdot]$ if for each test function $\varphi \in C_{Lip}(\mathbb{R%
}^{m+n})$ we have
\begin{equation*}
\hat{\mathbb{E}}[\varphi(X,Y)]=\hat{\mathbb{E}}[\hat{\mathbb{E}}[\varphi(x,Y)]_{x=X}].
\end{equation*}
\end{definition}

\begin{remark}
{The situation \textquotedblleft$Y$ is independent of  $X$%
\textquotedblright often appears when $Y$ occurs after $X$, thus a robust
expectation should take the information of $X$ into account. }
\end{remark}

\begin{remark}
In a sublinear expectation space
$(\Omega,\mathcal{H},\hat{\mathbb{E}})$, $Y$ is
independent of  $X$ means that the uncertainty of distributions $%
\{F_{Y}(\theta,\cdot):\theta \in \Theta \}$ of $Y$ does not change after each
realization of $X=x$. In other words, the ``conditional sublinear
expectation'' of $Y$ with respect to $X$ is $\hat{\mathbb{E}}[\varphi(x,Y)]_{x=X}$%
. In the case of linear expectation, this notion of independence is just the
classical one.

It is important to observe that, under a nonlinear expectation, $Y$ is independent of $X$ does not automatically imply that $X$ is also independent of $Y$.
Examples can be found in Peng \cite{p7}.   Hu \cite{h1} systematically characterized the situation when $X$  and $Y$ are mutually independent. 
\end{remark}

The independence property of two random vectors $X,Y$ involves only the
``joint distribution'' of $(X,Y)$. We refer to Peng \cite{p7} for the existence  of 
random vectors $X_1,X_2,\cdots,X_n$, with given ``marginal distributions'' and with a
specific direction of independence. 

\subsection{Maximal distributions and law of large numbers }

Let us define a special type of very simple distributions which are
frequently used in practice, known as ``worst case risk measure''. 

\begin{definition}
(\textbf{Maximal distribution}) \label{Prop-G1 copy(1)}%
\index{Maximal distribution}A $d$-dimensional random vector $%
\eta=(\eta_{1},\cdots,\eta_{d})$ on a sublinear expectation space $(\Omega,%
\mathcal{H},\hat{\mathbb{E}})$ is called \textbf{maximal distributed} if there
exists a bounded, closed and convex subset $\Gamma \subset \mathbb{R}^{d}$
such that%
\begin{equation*}
\hat{\mathbb{E}}[\varphi(\eta)]=\max_{y\in \Gamma}\varphi(y),\,\,\, \varphi\in C_{l.Lip}({\mathbb{R}}^d).
\end{equation*}
For the case  $d=1$ we have $\Gamma=[\underline{\mu},\overline{\mu}]$, where $%
\overline{\mu}=\hat{\mathbb{E}}[\eta]\ \text{and} \ \underline{\mu}=-\mathbb{E }%
[-\eta]. $
The distribution of $\eta$ is given by
\begin{equation*}
\mathbb{F}_{\eta}[\varphi]=\hat{\mathbb{E}}[\varphi(\eta)]=\sup _{\underline{\mu}%
\leq y\leq \underline{\mu}}\varphi(y)\ \ \ \text{for} \ \varphi \in C_{l.Lip}(%
\mathbb{R}).
\end{equation*} 
We denote by $\eta\overset{d}{=}M_{[\underline{\mu},\overline{\mu}]}$. 

\end{definition}

\begin{remark}
Here $\Gamma$ gives the degree of uncertainty of $\eta$. It is easy to check
that this maximally distributed random vector $\eta$ satisfies
\begin{equation*}
a\eta+b%
\bar{\eta}\overset{d}{=}(a+b)\eta\ \ \ \text{for }a,b\geq0,\
\end{equation*}
where $\bar{\eta}$ is an independent copy of $\eta$. 
In fact this relation characterizes a maximal distribution (see  \cite{p7}). Maximal
distribution is also called ``worst case risk measure'' in finance. It is easy to check that if $\eta'$ and $eta$ are both maximally distributed and 
if  $\eta'$ is independent of $\eta$, then $\eta$ is also independent of $\eta'$.
\end{remark}

\begin{remark}

Hu \cite{h1} proved that  for two non-trivial random variables $X$ and $Y$ under a sublinear expectation space, namely the distribution of $X$ is not linear and $Y$ is not a constant , if $X$ is independent from $Y$ and $Y$ is independent from $X$, then $X$ and $Y$ must be maximally distributed.

\end{remark}

In the limit theory of sublinear expectations,  the  law of large numbers play a fundamental role. 

\begin{theorem}\label{LLNaa}
(\textbf{Law of large numbers, Peng \cite{p7}})%
\index{Law of large numbers} {Let $\left \{ Y_{i}\right \} _{i=1}^{\infty}$
be a sequence of }$\mathbb{R}^{d}$-valued random variables on a sublinear
expectation space{\ $($}$\Omega,${$\mathcal{H},\hat{\mathbb{E}})$}. We assume that
$Y_{i+1}\overset{d}{=}Y_{i}$ and $Y_{i+1}$ is independent from $%
\{Y_{1},\cdots,Y_{i}\}$  for each $i=1,2,\cdots$. We assume furthermore the following uniformly integrable condition:
\begin{equation}  \label{2.3.17}
\lim_{\lambda\to +\infty }{\hat{\mathbb{E}}}[(|Y_1|-\lambda)^+]=0.
\end{equation}
Then the sequence $\{(Y_1+\cdots+Y_n)/n\}_{n=1}^\infty$  converges in law to a maximal distribution, i.e., %
\begin{equation}
\lim_{n\rightarrow \infty}\hat{\mathbb{E}}[\varphi(\frac{1}{n}\sum_{i=1}^nY_{i})]=\max_{\theta\in \bar{\Theta}}\varphi(\theta),\ \   \label{e325}
\end{equation}
for all functions $\varphi \in C(\mathbb{R}^{d})$ satisfying linear growth
condition \textup{(}$|\varphi(x)|\leq C(1+|x|)$\textup{)}, where 
$\bar{\Theta}$ is the (unique) bounded, closed and convex subset of $\mathbb{R}^d$ satisfying 
\[
\max_{\theta\in\bar{\Theta}}\left\langle p,\theta\right\rangle =\hat{\mathbb{E}}%
[\left\langle p,Y_{1}\right\rangle ],\ \ \ p\in\mathbb{R}^{d}.
\]
\end{theorem}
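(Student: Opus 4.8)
The plan is to reduce the statement to the classical law of large numbers by combining the robust representation of $\hat{\mathbb{E}}$ with a martingale decomposition. Write $S_n:=Y_1+\cdots+Y_n$ and $g(p):=\hat{\mathbb{E}}[\langle p,Y_1\rangle]$. Sublinearity of $\hat{\mathbb{E}}$ makes $g$ a finite sublinear function on $\mathbb{R}^d$ ($g(p)\le|p|\,\hat{\mathbb{E}}[|Y_1|]$, and $\hat{\mathbb{E}}[|Y_1|]<\infty$ by (\ref{2.3.17})), so $\bar\Theta:=\{\theta\in\mathbb{R}^d:\langle p,\theta\rangle\le g(p)\text{ for all }p\}$ is well defined; it is the unique bounded, closed, convex set with $\max_{\theta\in\bar\Theta}\langle p,\theta\rangle=g(p)$ for all $p$ (the subdifferential at $0$ of the finite sublinear $g$). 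Using (\ref{2.3.17}), the bound $|\hat{\mathbb{E}}[\varphi(S_n/n)]-\hat{\mathbb{E}}[\psi(S_n/n)]|\le\hat{\mathbb{E}}[|\varphi-\psi|(S_n/n)]$, and subadditivity in the form $\hat{\mathbb{E}}[(|S_n/n|-R)^+]\le\hat{\mathbb{E}}[(|Y_1|-R)^+]$, I would first reduce to test functions $\varphi\in C_{b.Lip}(\mathbb{R}^d)$; then it suffices to prove $\limsup_n\hat{\mathbb{E}}[\varphi(S_n/n)]\le\sup_{\theta\in\bar\Theta}\varphi(\theta)$ and $\liminf_n\hat{\mathbb{E}}[\varphi(S_n/n)]\ge\sup_{\theta\in\bar\Theta}\varphi(\theta)$.

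For the representation, write $\mathbb{F}_{Y_1}[\cdot]=\sup_{Q\in\mathcal{P}_1}E_Q[\cdot]$ on $C_{b.Lip}(\mathbb{R}^d)$ by Lemma~\ref{I-le4} (taking $\mathcal{P}_1$ maximal), extended to the linear-growth functions $\langle p,\cdot\rangle$ and $|\cdot|$ by truncation and (\ref{2.3.17}). Iterating the independence identity of Definition~\ref{d2}, together with a measurable selection at each inner supremum, gives $\hat{\mathbb{E}}[\varphi(S_n/n)]=\sup_{P}E_P[\varphi(S_n/n)]$, where $P$ runs over the probability measures on $(\mathbb{R}^d)^n$ such that, with $\mathcal{F}_i=\sigma(Y_1,\dots,Y_i)$, the conditional law of $Y_i$ given $\mathcal{F}_{i-1}$ lies in $\mathcal{P}_1$ $P$-a.s. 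For such a $P$ set $\theta_{i-1}:=E_P[Y_i\mid\mathcal{F}_{i-1}]$; since $E_P[\langle p,Y_i\rangle\mid\mathcal{F}_{i-1}]\le g(p)$ for every $p$ we get $\theta_{i-1}\in\bar\Theta$ a.s., so $M_n:=\sum_{i=1}^n(Y_i-\theta_{i-1})$ is a $P$-martingale and
\[
\frac{S_n}{n}=A_n+\frac{M_n}{n},\qquad A_n:=\frac1n\sum_{i=0}^{n-1}\theta_i\in\bar\Theta\ \ P\text{-a.s.}
\]
by convexity of $\bar\Theta$.

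The upper bound then comes from a martingale law of large numbers with increments uniformly integrable in $i$ and uniformly over $P$: since the conditional laws lie in $\mathcal{P}_1$, $E_P[(|Y_i|-\lambda)^+\mid\mathcal{F}_{i-1}]\le\hat{\mathbb{E}}[(|Y_1|-\lambda)^+]=:\rho(\lambda)$, so splitting each $Y_i-\theta_{i-1}$ into a martingale difference bounded by $2\lambda$ (estimated in $L^2$, contributing $O(\lambda n^{-1/2})$) plus a martingale-difference tail part (contributing $O(\rho(\lambda/2-K_0))$ in $L^1$, $K_0:=\max_{\theta\in\bar\Theta}|\theta|$) gives $\sup_P E_P[|M_n|]/n\le 2\lambda n^{-1/2}+C\rho(\lambda/2-K_0)$, which tends to $0$ on letting $n\to\infty$ then $\lambda\to\infty$. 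Hence $\hat{\mathbb{E}}[\varphi(S_n/n)]=\sup_P E_P[\varphi(A_n+M_n/n)]\le\sup_P\big(E_P[\varphi(A_n)]+L_\varphi E_P[|M_n|/n]\big)\le\sup_{\theta\in\bar\Theta}\varphi(\theta)+o(1)$. For the lower bound, tightness from (\ref{2.3.17}) makes $\mathcal{P}_1$ weakly compact, and (being maximal) convex, so $\{E_Q[Y_1]:Q\in\mathcal{P}_1\}$ is a compact convex set whose support function is $g$, hence equals $\bar\Theta$; choosing a maximiser $\theta^\ast$ of $\varphi$ on the compact set $\bar\Theta$ and $Q^\ast\in\mathcal{P}_1$ with $E_{Q^\ast}[Y_1]=\theta^\ast$, the product $P=(Q^\ast)^{\otimes n}$ is admissible and the classical law of large numbers gives $E_P[\varphi(S_n/n)]\to\varphi(\theta^\ast)=\sup_{\theta\in\bar\Theta}\varphi(\theta)$, whence $\liminf_n\hat{\mathbb{E}}[\varphi(S_n/n)]\ge\sup_{\theta\in\bar\Theta}\varphi(\theta)$. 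Together with the truncation reduction this yields (\ref{e325}).

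The step I expect to be the main obstacle is the rigorous justification of the representation $\hat{\mathbb{E}}[\varphi(S_n/n)]=\sup_P E_P[\varphi(S_n/n)]$ over kernel-composed measures inside the $C_{b.Lip}$ framework: it requires countably additive representing measures for $\mathbb{F}_{Y_1}$ (Lemma~\ref{I-le4}), a measurable selection for the conditional kernels, and the passage of $\mathbb{F}_{Y_1}[\varphi]=\sup_Q E_Q[\varphi]$ from bounded Lipschitz $\varphi$ to linear-growth $\varphi$ via (\ref{2.3.17}); by comparison, the uniform-in-$P$ martingale estimate and the appeal to the classical law of large numbers are routine bookkeeping. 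As a fallback I would use Peng's original, more analytic argument: put $u(t,x):=\sup_{\theta\in\bar\Theta}\varphi(x+t\theta)$, check $u(0,\cdot)=\varphi$, $u(1,0)=\sup_{\theta\in\bar\Theta}\varphi(\theta)$ and the semigroup identity $u(t+s,x)=\sup_{\theta\in\bar\Theta}u(t,x+s\theta)$ (this is exactly where convexity of $\bar\Theta$ enters), and telescope $\hat{\mathbb{E}}[u(1-\tfrac kn,S_k/n)]$ over $k=0,\dots,n$, estimating each one-step increment, after mollifying $\varphi$, by a Taylor expansion whose remainder is controlled by (\ref{2.3.17}); there the delicate point is making that one-step error estimate uniform.
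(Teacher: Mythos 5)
The paper itself contains no proof of Theorem \ref{LLNaa}: it is quoted from Peng \cite{p7}, with the weakened integrability condition (\ref{2.3.17}) attributed to \cite{zhang2}, so there is no in-paper argument to compare with line by line; I compare instead with Peng's cited proof. Your primary route is genuinely different from it: Peng's argument (essentially your stated fallback) is analytic, comparing $\hat{\mathbb{E}}[\varphi(S_n/n)]$ with the explicit function $u(t,x)=\sup_{\theta\in\bar{\Theta}}\varphi(x+t\theta)$, which solves $\partial_t u-g(Du)=0$, via a telescoping one-step Taylor estimate after mollification, whereas you pass to a robust representation of the iterated sublinear expectation over kernel-composed measures and then run a martingale decomposition plus the classical LLN scenario by scenario. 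Your outline is sound: the reduction from linear-growth to bounded Lipschitz test functions via $(|S_n/n|-R)^+\le\frac1n\sum_i(|Y_i|-R)^+$ and (\ref{2.3.17}) is correct; the uniform martingale estimate works because the conditional laws lie in $\mathcal{P}_1$, so the tail error is controlled by $\rho(\lambda)$ uniformly in $i$ and $P$; and the lower bound via the product measure $(Q^{\ast})^{\otimes n}$ only uses the easy inequality $E_{(Q^{\ast})^{\otimes n}}[\varphi(S_n/n)]\le\hat{\mathbb{E}}[\varphi(S_n/n)]$, obtained by iterating Definition \ref{d2} and monotonicity. The one genuinely nontrivial step --- which you identify honestly --- is $\hat{\mathbb{E}}[\varphi(S_n/n)]\le\sup_P E_P[\varphi(S_n/n)]$, requiring an $\epsilon$-optimal measurable selection of kernels; since the inner value functions are uniformly Lipschitz in the conditioning variables, a piecewise-constant selection on a countable measurable partition suffices, so this is fillable with standard (if tedious) work, and you also need the short monotone-truncation argument you mention to extend $\mathbb{F}_{Y_1}=\sup_{Q\in\mathcal{P}_1}E_Q$ from $C_{b.Lip}$ to $\langle p,\cdot\rangle$ and $(|\cdot|-\lambda)^{+}$. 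What your route buys is a transparent probabilistic picture (each scenario contributes a predictable drift $\theta_{i-1}\in\bar{\Theta}$, so $S_n/n$ is a $\bar{\Theta}$-valued average plus a vanishing martingale) and quantitative uniform bounds; what Peng's route buys is that it avoids representation and measurable-selection issues entirely, stays within the $C_{Lip}$ calculus of the sublinear expectation, and extends directly to the central limit theorem.
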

\begin{remark}
The above LLN in the framework of sublinear expectation tells us that the maximal distribution can be widely applied in situations where probability uncertainty 
cannot be negligible. 
\end{remark}

\begin{remark} \cite{p3,p7} first provided the above LLN (as well as the corresponding central limit theorem) under a stronger moment condition on $Y_1$.  But the proof is easily 
adapted for the weaker condition (\ref{2.3.17}) proposed by \cite{zhang2}.  This condition is also equivalent to the following more classical one:
 \[
\lim_{n\to\infty}\bar{\mathbb{E}}[|Y_1|\textbf{1}_{|Y_1|>n}]=0.
\]
 Chen  \cite{CE1} then studied the corresponding strong LLN  by using Choquet capacity.  
We also refer to \cite{wuchen}
\cite{zhang2,zhang4,zhang1},  \cite{ba}, \cite{HZ}、\cite{LL1}  for further studies of limit theorems under PU. 
\end{remark}

The convergence result of (\ref{e325}) means that the sequence $\{\frac{1}{n}\sum_{i=1}^nY_{i}\}$ converges in law to a $d$-dimensional maximal distributed random vector $\eta$ and{\ the corresponding sublinear
function }$g:\mathbb{R}^{d}\rightarrow \mathbb{R}$ is defined by
\begin{equation*}
g(p):=\hat{\mathbb{E}}[\left \langle p,Y_{1}\right \rangle ]{,}\ \ p\in \mathbb{R}%
^{d}.
\end{equation*}
If all components of $Y_1=(Y_1^1,\cdots,Y_1^d)$ satisfy $\hat{\mathbb{E}}[Y_1^i]=-\hat{\mathbb{E}}[-Y_1^i] =\mu^i$, namely $Y_1$ has no mean uncertainty. Then we can check that $\eta\equiv \mu=(\mu^1,\cdots,\mu^d)$ and $\bar{\Theta}=\{\mu\}$.  In this case we can prove that $\{\frac{1}{n}\sum_{i=1}^nY_{i}\}$ converges strongly to the constant vector $\mu$ since, according to Theorem \ref{LLNaa}, 
\begin{equation}
\lim_{n\rightarrow \infty}\hat{\mathbb{E}}[|\frac{1}{n}\sum_{i=1}^nY_{i}-\mu|]=\max_{\theta\in \bar{\Theta}}|\theta-\mu|=0.
\end{equation}
But in many practical situations, the mean uncertainty is not negligible.     

%

\section{Main result}

In this section,  we present our main result.  Through the whole section,  let $X_{1},\cdots,X_{n}$ be $n$ copies of the same 
maximal distribution 
\[
X_{i}\overset{d}{=}M_{[\underline{\mu},\overline{\mu}]},\  \  \  \ i=1,\cdots,n.
\]
with unknown parameters  $\underline{\mu}\leq \overline{\mu}$, and $X_{i}$ is independent of $\{X_{j}\}_{j=1,\cdots, i-1}$.  In short, we say our $X_{1},\cdots, X_{n}$ are i.i.d..  It is clear that maximum distribution
is completely determined by these two parameters.  It is then
important to construct statistics to estimate  parameters  $\underline{\mu}$ and $ \overline{\mu}$ properly.

It is natural to  think of the following statistics:
\begin{align}
\overline{\mu}^{\ast}(X_{1},\cdots,X_{n}) &  :=\max \{X_{1},\cdots
,X_{n}\},\label{1}\\
\underline{\mu}_{\ast}(X_{1},\cdots,X_{n}) &  :=\min \{X_{1},\cdots
,X_{n}\}.\label{2}%
\end{align}

Like in the classical statistical theory, we want our estimator to be unbiased 
in some sense, which should be redefined properly. 

\begin{definition}
Let  $f_n \in C(\mathbb{R}^{n})$,   a statistic 
$T_n=f_{n}(X_1,\cdots,X_n)$ is called an unbiased estimator of  $\overline{\mu}$ (resp. for $\underline{\mu}%
$) if
\begin{align*}
\mathbb{\hat{E}}[f_{n}(X_{1},\cdots,X_{n})]  & =\overline{\mu}\  \text{(resp.
}-\mathbb{\hat{E}}[-f_{n}(X_{1},\cdots,X_{n})]=\underline{\mu}\text{),}\  \\
\text{for all }-\infty & <\underline{\mu}\leq \overline{\mu}<\infty \text{.
}\  \
\end{align*}

\end{definition}

We have the following lemma. 

\begin{lemma}
\label{lem2.1} If $f_n\in C_{Lip}(\mathbb{R}^{n})$,  the estimator $T_n=f_{n}(X_1,X_2,\cdots,X_n) $ is  unbiased for the upper mean
$\overline{\mu}$,  then for all $\underline{\mu}\leq \overline{\mu}$, we have,
\begin{align}
\max_{(x_{1},\cdots, x_{n})\in [\underline{\mu},\overline{\mu}]^{n}}%
f_{n}(x_{1},\cdots,x_{n})  & =\overline{\mu},\label{eq1}\\
\text{(resp. \ }
\min_{(x_{1},\cdots, x_{n})\in [\underline{\mu},\overline{\mu}]^{n}}
f_{n}(x_{1},\cdots,x_{n})  & =\underline{\mu}%
\text{)}\label{eq2}%
\end{align}
Consequently, for all $\underline{\mu}\leq \overline{\mu}$, and $(x_{1}%
,\cdots,x_{n})\in \lbrack \underline{\mu},\overline{\mu}]^{n}$,
\begin{align}
f_{n}(x_{1},\cdots,x_{n})  & \leq \overline{\mu},\  \label{eq3}\\
\text{(resp. \ }f_{n}(x_{1},\cdots,x_{n})  & \geq \underline{\mu}%
\text{)}.\label{eq4}%
\end{align}

\end{lemma}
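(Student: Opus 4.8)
The plan is to unpack what unbiasedness means when the $X_i$ are maximally distributed, using the defining property that $\hat{\mathbb{E}}[\varphi(X_i)] = \sup_{y \in [\underline{\mu},\overline{\mu}]} \varphi(y)$ together with the independence structure. The key observation is that for i.i.d.\ maximally distributed $X_1,\dots,X_n$, the joint distribution is itself a maximal distribution on the cube $[\underline{\mu},\overline{\mu}]^n$: applying Definition~\ref{d2} iteratively (peeling off $X_n$, then $X_{n-1}$, and so on), one gets
\[
\hat{\mathbb{E}}[f_n(X_1,\dots,X_n)] = \max_{(x_1,\dots,x_n) \in [\underline{\mu},\overline{\mu}]^n} f_n(x_1,\dots,x_n)
\]
for $f_n \in C_{Lip}(\mathbb{R}^n)$ (continuity on the compact cube guarantees the supremum is attained, so the $\sup$'s become $\max$'s; Lipschitz regularity keeps us inside the class of admissible test functions at every stage of the iteration). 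I would carry out this iteration carefully as the first step, since it is the crux: at stage $k$ one treats $x_1,\dots,x_{n-k}$ as frozen parameters and uses that $X_{n-k+1}$ is independent of $\{X_1,\dots,X_{n-k}\}$ to replace $\hat{\mathbb{E}}[\,\cdot\,]$ over $X_{n-k+1}$ by $\sup_{x_{n-k+1} \in [\underline{\mu},\overline{\mu}]}$, noting that the intermediate functions stay Lipschitz (a sup of a uniformly Lipschitz family over a compact set is Lipschitz).

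Once this identity is in hand, the hypothesis that $T_n$ is unbiased for $\overline{\mu}$ says precisely $\hat{\mathbb{E}}[f_n(X_1,\dots,X_n)] = \overline{\mu}$ for \emph{every} choice of parameters $-\infty < \underline{\mu} \le \overline{\mu} < \infty$. Combining with the displayed identity yields \eqref{eq1} directly:
\[
\max_{(x_1,\dots,x_n) \in [\underline{\mu},\overline{\mu}]^n} f_n(x_1,\dots,x_n) = \overline{\mu}, \qquad \text{for all } \underline{\mu} \le \overline{\mu}.
\]
The resp.\ statement \eqref{eq2} follows the same way, using $-\hat{\mathbb{E}}[-f_n(X_1,\dots,X_n)] = -\max(-f_n) = \min f_n$ over the cube, and unbiasedness for $\underline{\mu}$. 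Finally, \eqref{eq3} and \eqref{eq4} are immediate consequences: since \eqref{eq1} holds and any point $(x_1,\dots,x_n) \in [\underline{\mu},\overline{\mu}]^n$ satisfies $f_n(x_1,\dots,x_n) \le \max_{[\underline{\mu},\overline{\mu}]^n} f_n = \overline{\mu}$, and symmetrically for the minimum.

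The main obstacle I anticipate is the iteration establishing the joint-distribution identity, specifically verifying that the partially-evaluated functions $\psi_k(x_1,\dots,x_{n-k}) := \hat{\mathbb{E}}[\psi_{k-1}(x_1,\dots,x_{n-k},X_{n-k+1})]$ remain in $C_{Lip}$ at each stage so that Definition~\ref{d2} keeps applying and the maximal-distribution formula is legitimate at the next step. This is a routine but slightly fussy point: one needs that for a jointly Lipschitz $\psi(x,y)$ the function $x \mapsto \sup_{y \in K} \psi(x,y)$ over a compact $K$ is Lipschitz with the same constant, and that the supremum is a genuine maximum by continuity and compactness. Everything else — the passage from $\max f_n = \overline{\mu}$ to the pointwise bounds, and the min-version via $f_n \mapsto -f_n$ — is formal.
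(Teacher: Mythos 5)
Your proposal is correct and follows essentially the same route as the paper: peel off one variable at a time using independence of $X_k$ from $\{X_1,\dots,X_{k-1}\}$ and the maximal-distribution formula, verify that $\max_{x_k\in[\underline{\mu},\overline{\mu}]} f$ stays Lipschitz with the same constant, conclude $\hat{\mathbb{E}}[f_n(X_1,\dots,X_n)]=\max_{[\underline{\mu},\overline{\mu}]^n} f_n$, and then read off \eqref{eq1}--\eqref{eq4} from unbiasedness (with the min case via $-f_n$). The point you flag as the only fussy step (Lipschitz stability of the partial maximum) is exactly the step the paper checks explicitly, so nothing is missing.
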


\begin{proof}
Since
\begin{eqnarray*}
\hmE [f_{n}(X_{1},\cdots,X_{n})]  
&=& \hmE \left[\left \{  \hmE[\max_{\lmu \leq x_{n}\leq \bmu } f_{n}(x_{1},\cdots,x_{n})]\right \}  _{x_{1}=X_{1},\cdots,x_{n-1}=X_{n-1}}\right]  \\
&=&\hmE \left[ \max_{\lmu \leq x_{n}\leq \bmu } f_{n}(X_{1},\cdots,X_{n-1}, x_{n})\right]\\
& =& ...=\max_{1\leq i\leq n}\max_{\underline{\mu}\leq x_{i}\leq \overline{\mu}%
}f_{n}(x_{1},\cdots,x_{n}).
\end{eqnarray*}
Denote $f_{n-1}(x_{1}, \cdots, x_{n-1}):=\max_{\lmu \leq x_{n}\leq \bmu } f_{n}(x_{1},\cdots,x_{n-1}, x_{n})$, 
then 
\begin{eqnarray*}
&&|f_{n-1}(a_{1}, \cdots, a_{n-1})-f_{n-1}(b_{1},\cdots, b_{n-1})|\\
&\le&\max_{x_{n}\in [\lmu, \bmu]} |f_{n}(a_{1},\cdots, a_{n-1}, x_{n})-f_{n}(b_{1},\cdots, b_{n-1}, x_{n})|\\
&\le & \max_{x_{n}\in [\lmu, \bmu]}L|(a_{1},\cdots, a_{n-1})-(b_{1},\cdots, b_{n-1})|\\
&=&L|(a_{1},\cdots, a_{n-1})-(b_{1},\cdots, b_{n-1})|,
\end{eqnarray*}
which means $f_{n-1}\in C_{Lip}(\R^{n-1})$, hence we can continue our equality 
\begin{eqnarray*}
\hmE [f_{n}(X_{1},\cdots,X_{n})]  
&=& \hmE [f_{n-1}(X_{1},\cdots,X_{n-1})] \\
&\cdots&\\
&=& \hmE [f_{1}(X_{1})]\\
&=&\max_{x_{1}\in [\lmu, \bmu]}f_{1}(x_{1})\\
&=&\max_{x_{1}\in [\lmu, \bmu]}\max_{x_{2}\in [\lmu,\bmu]}f_{2}(x_{1}, x_{2})\\
&=&\max_{(x_{1},x_{2})\in [\lmu, \bmu]^{2}}f_{2}(x_{1}, x_{2})\\
&\cdots&\\
&=&  \max_{(x_{1},\cdots , x_{n})\in [\lmu, \bmu]^{n}}f_{n}(x_{1},\cdots, x_{n}).
\end{eqnarray*}

Then we  have (\ref{eq1}) and thus (\ref{eq3}). The proof of (\ref{eq2}) and (\ref{eq4}) are similar.
\end{proof}

Now, we present our main result.

\begin{theorem}
\label{thm2.2}Let  $X_{1},\cdots,X_{n}$ be i.i.d. sample of size $n$ from the population of maximal
distribution 

\[
X_{i}\overset{d}{=}M_{[\underline{\mu},\overline{\mu}]},\  \  \  \ i=1,\cdots,n,
\]
with unknown parameters  $\underline{\mu}\leq \overline{\mu}$.   Then we have, quasi surely (i.e., $P_{\theta}$-almost surely for any $\theta\in \Theta$),
\[
\underline{\mu}\leq \min \{X_{1}(\omega),\cdots,X_{n}(\omega)\} \leq \max
\{X_{1}(\omega),\cdots,X_{n}(\omega)\} \leq \overline{\mu}.
\]
Moreover,
\[
\widehat{\overline{\mu}}_{n}=\max \{X_{1},\cdots,X_{n}\}
\]
is the largest unbiased  estimator for the upper mean $\overline{\mu}$, 
\[
\widehat{\underline{\mu}}_{n}=\min \{X_{1},\cdots,X_{n}\}
\]
is the smallest  unbiased estimator for the lower mean $\underline{\mu}$.
\end{theorem}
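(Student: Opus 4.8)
The plan is to establish the three assertions in turn, using as the main tool the identity
$\hat{\mathbb{E}}[f_n(X_1,\dots,X_n)] = \max_{(x_1,\dots,x_n)\in[\underline{\mu},\overline{\mu}]^n} f_n(x_1,\dots,x_n)$ for every $f_n\in C_{Lip}(\mathbb{R}^n)$, which is exactly what the computation inside the proof of Lemma \ref{lem2.1} yields; equivalently, the joint law of the i.i.d. sample $(X_1,\dots,X_n)$ is the maximal distribution carried by the cube $[\underline{\mu},\overline{\mu}]^n$.

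For the quasi-sure bounds, I would apply this identity to the nonnegative Lipschitz function $\varphi(x_1,\dots,x_n):=\sum_{i=1}^n\bigl[(x_i-\overline{\mu})^+ + (\underline{\mu}-x_i)^+\bigr]$, which vanishes identically on $[\underline{\mu},\overline{\mu}]^n$, so that $\hat{\mathbb{E}}[\varphi(X_1,\dots,X_n)]=0$. Since by Theorem \ref{r1} (and the remark following it) every $P_\theta$ satisfies $\int Z\,dP_\theta\le\hat{\mathbb{E}}[Z]$, we get $0\le\int\varphi(X_1,\dots,X_n)\,dP_\theta\le 0$, hence $\varphi(X_1,\dots,X_n)=0$ $P_\theta$-a.s.; because each summand of $\varphi$ is nonnegative, this forces $\underline{\mu}\le X_i(\omega)\le\overline{\mu}$ for every $i$, $P_\theta$-a.s., and intersecting these finitely many full-measure events gives $\underline{\mu}\le\min\{X_1,\dots,X_n\}\le\max\{X_1,\dots,X_n\}\le\overline{\mu}$ quasi surely.

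Next, $\widehat{\overline{\mu}}_n=\max\{X_1,\dots,X_n\}$ is unbiased for $\overline{\mu}$: since $(x_1,\dots,x_n)\mapsto\max\{x_1,\dots,x_n\}$ is Lipschitz, the identity above gives $\hat{\mathbb{E}}[\widehat{\overline{\mu}}_n]=\max_{(x_1,\dots,x_n)\in[\underline{\mu},\overline{\mu}]^n}\max\{x_1,\dots,x_n\}=\overline{\mu}$ for all $\underline{\mu}\le\overline{\mu}$, and symmetrically $-\hat{\mathbb{E}}[-\widehat{\underline{\mu}}_n]=\underline{\mu}$. To see that it is the \emph{largest} unbiased estimator --- meaning that $T_n\le\widehat{\overline{\mu}}_n$ quasi surely for every unbiased $T_n=f_n(X_1,\dots,X_n)$ with $f_n\in C_{Lip}(\mathbb{R}^n)$ --- fix $(x_1,\dots,x_n)\in[\underline{\mu},\overline{\mu}]^n$ and set $\ell:=\min\{x_1,\dots,x_n\}$, $m:=\max\{x_1,\dots,x_n\}$, so $(x_1,\dots,x_n)\in[\ell,m]^n$. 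As $f_n$ is unbiased for \emph{all} admissible parameter pairs, it is in particular unbiased for the pair $(\ell,m)$, and Lemma \ref{lem2.1}, inequality (\ref{eq3}), applied with these parameters yields $f_n(x_1,\dots,x_n)\le m=\max\{x_1,\dots,x_n\}$. Combining this pointwise bound on the cube with the quasi-sure inclusion $(X_1,\dots,X_n)\in[\underline{\mu},\overline{\mu}]^n$ proved above gives $T_n\le\widehat{\overline{\mu}}_n$ quasi surely. The statement for $\widehat{\underline{\mu}}_n$ is obtained by the same reasoning, using (\ref{eq4}) to get $f_n(x_1,\dots,x_n)\ge\ell=\min\{x_1,\dots,x_n\}$.

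The step I expect to require the most care is the transition from a pointwise inequality on $[\underline{\mu},\overline{\mu}]^n$ to an inequality between random variables holding quasi surely: it rests on the first part, i.e. on localizing $(X_1,\dots,X_n)$ inside the cube, which itself presupposes a representation of $\hat{\mathbb{E}}$ by a dominated family $\{P_\theta\}$ (some regularity of $\hat{\mathbb{E}}$, implicit in the meaning of ``quasi surely''). A minor point is the admissible class of competing estimators: the definition of unbiasedness allows $f_n\in C(\mathbb{R}^n)$ while Lemma \ref{lem2.1} is phrased for $f_n\in C_{Lip}(\mathbb{R}^n)$, so for merely continuous $f_n$ one should either extend the basic identity $\hat{\mathbb{E}}[\varphi(X_1,\dots,X_n)]=\max_{[\underline{\mu},\overline{\mu}]^n}\varphi$ to that class by truncation and approximation, or restrict attention to Lipschitz estimators.
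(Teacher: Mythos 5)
Your proposal is correct, and its core is the same argument as the paper's: the key step in both is to take an arbitrary point, reparametrize with $\tilde{\underline{\mu}}=\min$, $\tilde{\overline{\mu}}=\max$ of its coordinates, invoke unbiasedness for \emph{that} parameter pair, and apply Lemma \ref{lem2.1} (inequality (\ref{eq3}), resp. (\ref{eq4})) to conclude $f_n\leq\max$ (resp. $f_n\geq\min$). Two minor differences are worth noting. First, the paper runs this argument for arbitrary $(y_1,\cdots,y_n)\in\mathbb{R}^n$, which yields the pointwise domination $f_n(y)\leq\max\{y_1,\cdots,y_n\}$ everywhere, so that $T_n\leq\widehat{\overline{\mu}}_n$ holds surely; you restrict to points of $[\underline{\mu},\overline{\mu}]^n$ and then upgrade to a quasi-sure comparison via the localization of the sample in the cube --- equally valid, just marginally weaker in form. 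Second, your argument for the quasi-sure inclusion $\underline{\mu}\leq X_i\leq\overline{\mu}$ (via the nonnegative Lipschitz test function vanishing on the cube and the representation $\int\cdot\,dP_\theta\leq\hat{\mathbb{E}}[\cdot]$) is a genuine addition: the paper asserts this part of the theorem but gives no proof of it, so you have filled a gap the paper leaves open. Your closing remark about the mismatch between the class $C(\mathbb{R}^n)$ in the definition of unbiasedness and the class $C_{Lip}(\mathbb{R}^n)$ in Lemma \ref{lem2.1} is also apt; the paper silently ignores this point as well.
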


\begin{proof}
It is easy to check that $\widehat{\overline{\mu}}=\max \{X_{1},\cdots,X_{n}\}$
is an unbiased estimator  for the unknown
upper mean $\overline{\mu}$ and  $\widehat{\underline{\mu}}=\min \{X_{1}%
,\cdots,X_{n}\}$ is an unbiased estimator for the unknown  lower mean $\underline{\mu}$.

Let $T_{n}=f_{n}(Y_{1},\cdots,Y_{n})$ be a given unbiased  estimator for the
upper mean $\overline{\mu}$.  For any  $y_{1},\cdots,y_{n}%
\in \mathbb{R}$, we  set
\[
\overline{\tilde\mu}=\max \{ y_{1},\cdots,y_{n}\},\  \  \  \underline{\tilde\mu
}=\min \{ y_{1},\cdots, y_{n}\},
\]
and consider the case $Y_{i}\overset{d}{=}M_{[\underline{\tilde\mu},\overline{\tilde\mu}%
]}$.  According to Lemma \ref{lem2.1}, the unbiased estimator $T_{n}=f_{n}(Y_{1},\cdots,Y_{n})$, $f_n$ must
satisfy (\ref{eq3}), namely,%
\[
f_{n}(y_{1},\cdots,y_{n})\leq \overline{\tilde \mu}=\max \{ y_{1},\cdots,y_{n}\}.
\]
Since $y_{1},\cdots,y_{n}$ can be arbitrarily chosen, we then
have
\[
f_{n}(y_{1},\cdots,y_{n})\leq \max \{y_{1},\cdots,y_{n}\},\  \  \forall
y_{1},\cdots,y_{n}\in \mathbb{R}.
\]
Thus $\widehat{\overline{\mu}}_n$ is the largest estimator for the upper mean.
We can prove that $\widehat{\underline{\mu}}_n$ is
the smallest estimator for the lower mean. 
\end{proof}

The next proposition tell us that the two estimators are both maximal distributed with the same lower
mean $\underline{\mu}$ and upper mean $\overline{\mu}$.

\begin{proposition}\label{prop1}
Let $X_{1},\cdots,X_{n}$ be a maximal distributed i.i.d. sample with
$X_{1}=M_{[\underline{\mu},\overline{\mu}]}$. Then
\begin{equation}
\max \{X_{1}(\omega),\cdots,X_{n}(\omega)\}\overset{d}{=}\min \{X_{1}(\omega),\cdots
,X_{n}(\omega)\}\overset{d}{=}M_{[\underline{\mu},\overline{\mu}%
]}.\  \  \label{eq5}%
\end{equation}
 \end{proposition}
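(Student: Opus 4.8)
The plan is to verify, straight from the definition of a maximal distribution, that each of $Z=\max\{X_1,\ldots,X_n\}$ and $Z=\min\{X_1,\ldots,X_n\}$ satisfies
\[
\hat{\mathbb{E}}[\varphi(Z)]=\sup_{\underline{\mu}\le y\le\overline{\mu}}\varphi(y)\qquad\text{for every }\varphi\in C_{l.Lip}(\mathbb{R}),
\]
since this is exactly what $Z\overset{d}{=}M_{[\underline{\mu},\overline{\mu}]}$ means. I would carry out the computation for $\max$ in detail; the $\min$ case then follows either by the same argument verbatim or by applying the $\max$ result to the i.i.d.\ maximal sample $-X_1,\ldots,-X_n\overset{d}{=}M_{[-\overline{\mu},-\underline{\mu}]}$ and using $\min\{X_1,\ldots,X_n\}=-\max\{-X_1,\ldots,-X_n\}$.

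First I would note that the maps $\max,\min\colon\mathbb{R}^n\to\mathbb{R}$ are $1$-Lipschitz, so for $\varphi\in C_{l.Lip}(\mathbb{R})$ the composition $\psi:=\varphi\circ\max$ lies in $C_{l.Lip}(\mathbb{R}^n)$. Then I would run precisely the telescoping computation from the proof of Lemma~\ref{lem2.1}: peel off the variables $x_n,x_{n-1},\ldots,x_1$ one at a time, using at each stage the independence of $X_i$ from $(X_1,\ldots,X_{i-1})$ together with $X_i\overset{d}{=}M_{[\underline{\mu},\overline{\mu}]}$, and invoking the Lipschitz‑preservation estimate of Lemma~\ref{lem2.1} to see that each partial supremum
\[
\psi_k(x_1,\ldots,x_k):=\max_{\underline{\mu}\le x_{k+1},\ldots,x_n\le\overline{\mu}}\psi(x_1,\ldots,x_n)
\]
remains locally Lipschitz. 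This yields
\[
\hat{\mathbb{E}}[\varphi(\max\{X_1,\ldots,X_n\})]=\max_{(x_1,\ldots,x_n)\in[\underline{\mu},\overline{\mu}]^n}\varphi(\max\{x_1,\ldots,x_n\}).
\]
Finally I would observe that, as $(x_1,\ldots,x_n)$ runs over the cube $[\underline{\mu},\overline{\mu}]^n$, the value $\max\{x_1,\ldots,x_n\}$ runs over the whole interval $[\underline{\mu},\overline{\mu}]$: it never leaves that interval, and it takes an arbitrary prescribed value $y\in[\underline{\mu},\overline{\mu}]$ at the diagonal point $(y,\ldots,y)$. Hence the right‑hand side above equals $\sup_{\underline{\mu}\le y\le\overline{\mu}}\varphi(y)$, which proves $\max\{X_1,\ldots,X_n\}\overset{d}{=}M_{[\underline{\mu},\overline{\mu}]}$; the identical range observation for $\min$ (also surjecting onto $[\underline{\mu},\overline{\mu}]$) finishes the $\min$ case.

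The only point needing care — and the one mild obstacle — is the passage from the bounded Lipschitz test functions, for which independence and the defining identity of a maximal distribution are stated in Section~2, to the locally Lipschitz $\varphi$ permitted in the definition of $M_{[\underline{\mu},\overline{\mu}]}$. I would handle this exactly as Lemma~\ref{lem2.1} does: its Lipschitz‑preservation computation shows each $\psi_k\in C_{l.Lip}$, so the peeling step is legitimate at every stage; alternatively, one first runs the argument for the truncations $\varphi_N:=(\varphi\wedge N)\vee(-N)\in C_{b.Lip}(\mathbb{R})$ and then lets $N\to\infty$, using monotone convergence on both sides together with the regularity of $\hat{\mathbb{E}}$ (Lemma~\ref{I-le3}). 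Everything else is a routine unwinding of the definitions of independence and of maximal distribution.
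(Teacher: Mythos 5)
Your proposal is correct and follows essentially the same route as the paper: both reduce $\hat{\mathbb{E}}[\varphi(\max\{X_{1},\cdots,X_{n}\})]$ via the iterated-independence computation of Lemma~\ref{lem2.1} to $\max_{(x_{1},\cdots,x_{n})\in[\underline{\mu},\overline{\mu}]^{n}}\varphi(x_{1}\vee\cdots\vee x_{n})$ and then use that the map $\max$ sends the cube onto $[\underline{\mu},\overline{\mu}]$, giving $\max_{y\in[\underline{\mu},\overline{\mu}]}\varphi(y)$. You merely spell out the Lipschitz-preservation and test-function details (and the $-X_{i}$ trick for the minimum) that the paper's two-line proof leaves implicit.
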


\begin{proof}
It is clear that, for each $\varphi \in C(\mathbb{R)}$, we have
\begin{eqnarray*}
\mathbb{\hat{E}}[\varphi(\max \{X_{1},\cdots,X_{n}\})]  
& =&\max_{(x_{1},\cdots, x_{n})\in [\lmu, \bmu]^{n}} \varphi(x_{1}\vee x_{2} \vee \cdots \vee x_{n})\\
& =& \max_{x\in [\lmu, \bmu]} \varphi(x)=\hmE[\varphi(X_{1})].
\end{eqnarray*}
Thus $\max \{X_{1},\cdots,X_{n}\} \overset{d}{=}M_{[\underline{\mu}%
,\overline{\mu}]}$. Similarly we can prove that $\min \{X_{1},\cdots
,X_{n}\} \overset{d}{=}M_{[\underline{\mu},\overline{\mu}]}$. 
\end{proof}

\begin{remark}
In fact, we can prove that, for  continuous function $f\in C(\mathbb{R}%
^{n})$, we have
\begin{equation}
f(X_{1},\cdots,X_{n})\overset{d}{=}M_{[\underline{\mu}_{f}%
,\overline{\mu}_{f}]}\label{eq6}%
\end{equation}
where
\[
\overline{\mu}_{f}:=\max_{(x_{1},\cdots, x_{n})\in [\lmu, \bmu]^{n}}f(x_{1},\cdots,x_{n}),
\  \  \underline{\mu}_{f}:=\min_{(x_{1},\cdots, x_{n})\in [\lmu, \bmu]^{n}}f(x_{1},\cdots,x_{n}).
\]
Indeed, for each $\varphi \in C(\mathbb{R})$,
$$\hmE [\varphi(f(X_{1},\cdots,X_{n}))]   =\max_{(x_{1},\cdots, x_{n})\in [\lmu, \bmu]^{n}}\varphi(f(x_{1},\cdots,x_{n}))
 =\max_{y\in \lbrack \underline{\mu}_{f},\overline{\mu}_{f}]}\varphi(y),
$$
which implies (\ref{eq6}). 
\end{remark}

\section{General estimator}

In many practical situations,  it is not easy to get a maximal distributed i.i.d. sample $\{X_{1},\cdots,X_{n}\}$  such that   $X_{i}\overset{d}{=}M_{[\underline{\mu},\overline{\mu}]}$.    We usually treat an asymptotic case with  a sequence of samples
$\{X_{1}^{(k)},\cdots,X_{n}^{(k)}\}_{k=1}^{\infty}$, which converges in law to  $\{X_{1},\cdots,X_{n}\}$ , where $n$ is fixed.  In this situation,  we still can get 
asymptotically unbiased estimators of the upper mean $\overline{\mu}$ and the
lower mean $\underline{\mu}$.   Now, the definition of  asymptotically unbiased estimator is presented.

\begin{definition}
For a sequence of samples
$\{X_{1}^{(k)},\cdots,X_{n}^{(k)}\}_{k=1}^{\infty}$ and a function $f\in  C(\mathbb{R}^{n})$,  we call the statistic $f(X_{1}^{(k)},\cdots,X_{n}^{(k)})$ an   \textbf{asymptotically ubiased} estimator  
 \begin{equation}
\lim_{k\rightarrow \infty}\mathbb{\hat{E}}[(f(X_{1}^{(k)},\cdots,X_{n}%
^{(k)})]=\overline{\mu}\text{ (resp. }=\underline{\mu}\text{).}\label{eq7}%
\end{equation}  \end{definition}

Generally,  if there exists  a sequence of i.i.d. d-dimensional samples $\{X_{i}\}_{i=1}^{\infty}$, where $X_i\overset{d}{=}X$,  we can construct the  asymptotically largest unbiased estimator for the distribution of
$X$. To this end, let us take any test function $\varphi \in C_{b,Lip}(\mathbb{R}^{d})$. It is clear that $\{ \varphi(X_{i})\}_{i=1}^{\infty}$ is a
sequence of bounded and i.i.d. random variables. We define the statistic 
\[
M_{k,n}[\varphi]
:=\frac{1}{n}\sum_{i=1}^{n}\varphi(X_{{\rm trn}(i,k)}),
\]
where ${\rm trn}(n,k)=\frac{(n+k)(n+k-1)}{2}-(k-1)$ is the numbering of the 
$(n, k)$ node when we array integers  in the infinite matrix by triangle order. 

By Theorem \ref{LLNaa}, for each fixed
$k=1,2,\cdots$, $\{M_{k,n}[\varphi]\}_{n=1}^{\infty}$ converges in law to the maximal distribution
$M_{[\underline{\mu}_{X}(\varphi),\overline{\mu}_{X}(\varphi)]}$,  where
\[
\overline{\mu}_{X}(\varphi):=\hat{\mathbb{E}}[\varphi(X)],\  \  \  \underline{\mu}%
_{X}(\varphi)=-\hat{\mathbb{E}}[-\varphi(X)].
\]

Observe that $\overline{\mu}_{X}(\cdot):C_{b.Lip}(\mathbb{R}^{d}%
)\rightarrow \mathbb{R}$ is the sublinear distribution operator  of \ $X$.
Moreover, for each fixed $k$, the i.i.d  sequence
$\{M_{1,n}[\varphi],\cdots,M_{k,n}[\varphi]\}$ converges to 
$\{Y_{1},\cdots,Y_{k}\}$ weakly, where $Y_{k}\overset{d}{=}M_{[\underline{\mu}%
_{X}(\varphi),\overline{\mu}_{X}(\varphi)]}$. Then the estimator 
\[
\mathbb{T}_{k}[\varphi](X_{1},\cdots,X_{nk}):=M_{1,n}[\varphi]\vee \cdots \vee
M_{k,n}[\varphi]
\]
provide us the asymptotically largest unbiased  estimator of the sublinear
distribution of $X$  by Theorem \ref{thm2.2}.

In the statistic $M_{k,n}$, we needy to align samples in the triangle order, which 
is not an efficient way to use samples. In the real application, we can fix a large number $n$ based on the total sample size, and group all samples equally with some group size $n$, and then take $M_{k,n}$ as the average of $k^{th}$ group. 
\section*{Acknowledgment} 

We thank Dr. Wang Hanchao who  provided some very useful suggestions to improve the first draft of this paper.

 \end{document}